\title{On a conjecture of B. C. Kellner}
\author{Olivier Bordell\`{e}s}
\address{2 all\'{e}e de la combe \\ 43000 Aiguilhe \\ France}
\email{borde43@wanadoo.fr}
\date{}
\dedicatory{}
\theoremstyle{plain}
\newtheorem{theorem}{Theorem}
\newtheorem{corollary}[theorem]{Corollary}
\newtheorem{lemma}[theorem]{Lemma}
\theoremstyle{definition}
\theoremstyle{remark}
\newcommand{\Z}{\mathbb {Z}}
\newcommand{\R}{\mathbb {R}}
\DeclareMathOperator{\li}{Li}
\begin{document}

\begin{abstract}
The aim of this note is a proof of a recent conjecture of Kellner concerning the number of distinct prime factors of a particular product of primes. The proof uses profound results from analytic number theory, such as Granville-Ramar\'{e}'s estimate of an exponential sum over primes.
\end{abstract}

\subjclass[2010]{Primary 11N37; Secondary 11L20, 11A41.}
\keywords{Walfisz's exponential sum, prime numbers.}

\maketitle

\thispagestyle{myheadings}
\font\rms=cmr8 
\font\its=cmti8 
\font\bfs=cmbx8

\section{Introduction and main result}

Let $n \in \Z_{\geqslant 1}$. In a recent paper \cite{kell}, Kellner studies the product
$$\mathfrak{p}_n := \prod_{\substack{p \; \textrm{prime} \\ s_p(n) \geqslant p}} p$$
where $s_p(n)$ stands for the sum of the digits in the base-$p$ expansion of $n$. Recall that, from Legendre's formula, $s_p(n) = n - (p-1)v_p(n!)$. As it is shown in \cite{kell,kells}, the values of this product are closely related to the denominators of the Bernoulli polynomials. In \cite{kell}, the author proves that 
$$\omega \left( \mathfrak{p}_n^+ \right) < \sqrt{n} \quad \textrm{and} \quad \omega \left( \mathfrak{p}_n^- \right) \leqslant \pi \left( \sqrt{n} \right)$$
where
$$\mathfrak{p}_n = \prod_{\substack{p < \sqrt{n} \\ s_p(n) \geqslant p}} p \times \prod_{\substack{p > \sqrt{n} \\ s_p(n) \geqslant p}} p := \mathfrak{p}_n^- \times \mathfrak{p}_n^+$$
and points out that the trivial bound $\omega \left( \mathfrak{p}_n^- \right) \leqslant \pi \left( \sqrt{n} \right)$ is sharp. On the other hand, the first bound concerning the number of distinct prime factors of $\mathfrak{p}_n^+$, coming from the identity (see \cite[Theorem~4]{kell})
\begin{equation}
   \omega \left( \mathfrak{p}_n^+ \right) = \sum_{\substack{p > \sqrt{n} \\ \left \lfloor \frac{n-1}{p-1} \right \rfloor > \left \lfloor \frac{n}{p} \right \rfloor}} 1 \label{e1}
\end{equation}
does not seem to be the best one and, on the basis of extended computations, the author surmises that there exists $\kappa > 1$ such that, for $n \to \infty$
\begin{equation}
   \omega \left( \mathfrak{p}_n^+ \right) \sim \kappa \, \frac{\sqrt{n}}{\log n}. \tag{C} \label{c} 
\end{equation}

In this note, we provide a proof of Conjecture~\eqref{c} which is a consequence of the more precise following result.

\begin{theorem}
\label{t1}
For any integer $n \in \Z_{\geqslant 1}$ sufficiently large
$$\omega \left( \mathfrak{p}_n^+ \right) = n \mathrm{E}_1 \left( \log \sqrt{n} \right)  + O \left( n^{1/2} \delta \left( \sqrt{n} \right) \right)$$
where the function $\delta$ is given in \eqref{e2} below and
\begin{equation}
   \textrm{E}_1 (x) = \int_x^\infty \frac{e^{-t}}{t} \, \mathrm{d}t \quad \left( x > 0 \right) \label{e1}
\end{equation}
is the exponential integral.
\end{theorem}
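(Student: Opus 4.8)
The plan is to start from the identity \eqref{e1} and turn its summand into a difference of floor functions. For a prime $p$ with $\sqrt n<p<n$, write $n=qp+r$ with $q=\lfloor n/p\rfloor\ (<p)$ and $0\le r<p$; then $\tfrac{n-1}{p-1}=q+\tfrac{q+r-1}{p-1}$ with $0\le q+r-1<2(p-1)$, so $\lfloor\tfrac{n-1}{p-1}\rfloor-\lfloor\tfrac np\rfloor\in\{0,1\}$ and equals the indicator in \eqref{e1}; moreover this difference vanishes as soon as $p\ge n$. Writing $\lfloor x\rfloor=x-\tfrac12-\psi(x)$ with $\psi(x)=\{x\}-\tfrac12$ the sawtooth function, and using $\tfrac{n-1}{p-1}-\tfrac np=\tfrac{n-p}{p(p-1)}$, this yields the \emph{exact} identity
\[
  \omega\!\left(\mathfrak{p}_n^+\right)=S_1-S_2,\qquad
  S_1:=\sum_{\sqrt n<p<n}\frac{n-p}{p(p-1)},\qquad
  S_2:=\sum_{\sqrt n<p<n}\left(\psi\!\left(\tfrac{n-1}{p-1}\right)-\psi\!\left(\tfrac np\right)\right).
\]
A point to keep in mind throughout: $S_2$ must be treated as a single sum, since each of its two halves is individually of size $\asymp n/\log n$ (already the range $n/2<p\le n$ contributes a term $\asymp n/\log n$ to $\sum_p\psi(n/p)$ and to $\sum_p\psi(\tfrac{n-1}{p-1})$), and only the difference is small.

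For the main term, I would use $\tfrac{n-p}{p(p-1)}=\tfrac{n}{p^2}+\tfrac{n}{p^2(p-1)}-\tfrac1{p-1}$ and note that, by Mertens' theorem, $\sum_{\sqrt n<p<n}\tfrac1{p-1}=O(1)$, while $n\sum_{\sqrt n<p<n}\tfrac1{p^2(p-1)}=O(1)$ and $n\sum_{p\ge n}p^{-2}=O(1)$; hence $S_1=n\sum_{p>\sqrt n}p^{-2}+O(1)$. Then, by partial summation together with the prime number theorem in the form $\pi(t)=\li(t)+O\!\bigl(t\,e^{-c\sqrt{\log t}}\bigr)$ and the substitution $t=e^u$ in $\int_{\sqrt n}^\infty\tfrac{\mathrm{d}t}{t^2\log t}=\mathrm{E}_1(\log\sqrt n)$, one obtains $n\sum_{p>\sqrt n}p^{-2}=n\,\mathrm{E}_1(\log\sqrt n)+O\!\bigl(n^{1/2}\delta(\sqrt n)\bigr)$, the function $\delta$ recording (in part) the error term in the prime number theorem. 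Since $n\,\mathrm{E}_1(\log n)=O(1/\log n)$, the boundary adjustments cost nothing, and $S_1=n\,\mathrm{E}_1(\log\sqrt n)+O\!\bigl(n^{1/2}\delta(\sqrt n)\bigr)$.

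It then remains to prove $S_2=O\!\bigl(n^{1/2}\delta(\sqrt n)\bigr)$, which is where the analytic machinery enters. Since $\tfrac{n-1}{p-1}=\tfrac np+\theta_p$ with $\theta_p=\tfrac{n-p}{p(p-1)}\in(0,1)$, I would feed a Vaaler-type truncated Fourier expansion of $\psi$, at a level $H$ that will be a power of $n$, into $S_2$: writing $e(t):=e^{2\pi i t}$ and $\|x\|$ for the distance from $x$ to the nearest integer, this replaces $\psi(\tfrac{n-1}{p-1})-\psi(\tfrac np)$ by the main contribution $\sum_{1\le|h|\le H}b_h\,e(hn/p)\bigl(e(h\theta_p)-1\bigr)$, with $|b_h|\ll 1/|h|$, up to an error $O\!\bigl(\min(1,H^{-2}\|n/p\|^{-2})+\min(1,H^{-2}\|(n-1)/(p-1)\|^{-2})\bigr)$. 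Summing over $p$: the error terms are controlled by counting, for each value $q=\lfloor n/p\rfloor$, the primes $p$ for which $n/p$ lies within $1/H$ of an integer — grouping these by the prime $\lfloor n/p\rfloor$ (or $\lfloor n/p\rfloor+1$) and invoking the prime number theorem — together with Walfisz-type bounds for the remaining sawtooth sums; the main contribution, after a dyadic splitting in $p$ and partial summation to peel off the smooth weight $e(h\theta_p)-1=O(\min(1,hn/p^2))$, reduces to bounding $\sum_{P<p\le 2P}e(hn/p)$ uniformly for $1\le h\le H$ and $\sqrt n<P<n$. Here I would invoke Granville and Ramar\'{e}'s estimate for exponential sums over primes (built on Vaughan's identity and Vinogradov's method), which gives a saving of Vinogradov strength over the trivial bound; balancing the truncation loss $\asymp n^{1/2}/H$ against this saving fixes the optimal $H$ and the function $\delta$ of \eqref{e2}.

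The hard part will be precisely this last estimate. One needs cancellation in $\sum_{P<p\le 2P}e(hn/p)$ that is both strong and uniform across the whole range: for $P$ close to $\sqrt n$ — which carries the mass of $\omega(\mathfrak{p}_n^+)$ and where the phase $hn/t$ runs through a long oscillation — one must genuinely exploit cancellation among the primes, whereas for $P$ close to $n$, where $hn/t$ barely moves, one must instead lean on the smallness of the weight $e(h\theta_p)-1$; and all of this has to survive the summation over $1\le|h|\le H$ against the weights $1/|h|$, along with the bookkeeping of the Vaaler truncation error, so as to land below $n^{1/2}\delta(\sqrt n)$. Securing this uniformity is exactly what the Granville--Ramar\'{e} bound is there to provide.
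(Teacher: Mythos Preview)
Your treatment of the main term $S_1$ is correct and matches the paper: partial summation with the prime number theorem (in Vinogradov--Korobov--Ford form) gives $S_1 = n\,\mathrm{E}_1(\log\sqrt n)+O\bigl(n^{1/2}\delta(\sqrt n)\bigr)$, and this is indeed where the function $\delta$ enters. But this already flags a misconception in your last paragraph: $\delta$ is \emph{not} produced by balancing the Vaaler truncation against the exponential-sum saving. The exponential-sum part of the paper yields a genuine power saving ($S_{12}\ll n^{49/100}(\log n)^{67/25}$); the bottleneck $n^{1/2}\delta(\sqrt n)$ comes from the PNT error in the main term and from the tail of large primes, not from the oscillatory analysis.

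The real gap is in your plan for $S_2$. You propose to run Vaaler over the whole range $\sqrt n<p<n$, writing the main harmonic contribution as $\sum_{|h|\le H}b_h\,e(hn/p)\bigl(e(h\theta_p)-1\bigr)$ and using $|e(h\theta_p)-1|\ll hn/p^2$ for large $p$. That trick does control the \emph{main} harmonic part for large $p$, but it does nothing for the Vaaler truncation error, which you bound pointwise by $\min\bigl(1,H^{-2}\|n/p\|^{-2}\bigr)$ for each $\psi$ separately. Summed over all primes $p\in(\sqrt n,n)$ this error is $\asymp \pi(n)/H\asymp n/(H\log n)$, not the $n^{1/2}/H$ you quote; forcing it below $n^{1/2}\delta(\sqrt n)$ requires $H\gg n^{1/2}/\delta$. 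With $H$ that large, the Granville--Ramar\'e bound at $P\sim\sqrt n$ gives, after summing over $h\le H$ and absorbing the Abel factor $1+hn/P^2$, a contribution of order $n^{23/48}H^{13/12}\gg n^{1/2}$, and the argument collapses. The two $\psi$-errors simply do not cancel: the Vaaler remainder is a one-sided Fej\'er-type majorant and carries no information about $\theta_p$.

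The paper closes exactly this gap by a separate, purely combinatorial step. For any $\alpha\in(0,1)$ one has $\theta_p<\alpha\sqrt n/p$ once $p>\alpha^{-1}\sqrt n$, hence
\[
\sum_{\alpha^{-1}\sqrt n<p\le n}\Bigl(\Bigl\lfloor\tfrac{n}{p}+\theta_p\Bigr\rfloor-\Bigl\lfloor\tfrac{n}{p}\Bigr\rfloor\Bigr)
\le\sum_{\alpha^{-1}\sqrt n<p\le n}\Bigl(\Bigl\lfloor\tfrac{n+\alpha\sqrt n}{p}\Bigr\rfloor-\Bigl\lfloor\tfrac{n}{p}\Bigr\rfloor\Bigr)
=\sum_{n<m\le n+\alpha\sqrt n}\omega\bigl(m;\alpha^{-1}\sqrt n,n\bigr)\ll\alpha\sqrt n,
\]
the last step because each $m\le 2n$ has $O(1)$ prime factors exceeding $\alpha^{-1}\sqrt n$. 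Taking $\alpha=\delta(\sqrt n)$ disposes of all primes $p>\sqrt n/\delta(\sqrt n)$ at cost $O\bigl(n^{1/2}\delta(\sqrt n)\bigr)$. The Vaaler + Granville--Ramar\'e machinery is then applied only on the short window $\sqrt n<p\le\sqrt n/\delta(\sqrt n)$, where the number of primes is $O(\sqrt n)$ up to subpolynomial factors; now the truncation loss really is $\ll\sqrt n/H$, one may take $H$ a tiny power of $n$ (the paper uses $H\approx n^{1/100}$), and the exponential-sum estimate gives a comfortable power saving. This combinatorial truncation is the missing idea in your sketch.
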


Since, for any $N \in \Z_{\geqslant 1}$
$$\textrm{E}_1(x) = \frac{e^{-x}}{x} \sum_{m=0}^{N-1} \frac{(-1)^m m!}{x^m} + (-1)^N N! \int_x^\infty \frac{e^{-t}}{t^{N+1}} \, \textrm{d}t$$
we deduce immediately the following estimate.

\begin{corollary}
For any integer $n \in \Z_{\geqslant 1}$ sufficiently large and any $N \in \Z_{\geqslant 1}$
$$\omega \left( \mathfrak{p}_n^+ \right) = \frac{2 \sqrt{n}}{\log n} - \frac{4 \sqrt{n}}{(\log n)^2} + \frac{16 \sqrt{n}}{(\log n)^3} - \frac{96 \sqrt{n}}{(\log n)^4} + \dotsb + (-1)^{N-1} \frac{2^N (N-1)! \sqrt{n}}{(\log n)^N} + O \left( \frac{2^{N+1} N! \sqrt{n}}{(\log n)^{N+1}} \right).$$
\end{corollary}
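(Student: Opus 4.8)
The result to establish is the asymptotic formula of Theorem~\ref{t1} for $\omega(\mathfrak{p}_n^+)$ (the Corollary then follows at once on inserting the displayed asymptotic expansion of $\mathrm{E}_1$ at $\log\sqrt n=\tfrac12\log n$, since the error $O(n^{1/2}\delta(\sqrt n))$ decays faster than every fixed negative power of $\log n$). The plan is to begin from Kellner's identity \cite[Theorem~4]{kell}, recalled above: $\omega(\mathfrak{p}_n^+)$ counts the primes $p>\sqrt n$ with $\left\lfloor\frac{n-1}{p-1}\right\rfloor>\left\lfloor\frac np\right\rfloor$. The first move is to linearise this condition. For $\sqrt n<p\le n$ one has
$$0\le\frac{n-1}{p-1}-\frac np=\frac{n-p}{p(p-1)}<1,$$
so $\left\lfloor\frac{n-1}{p-1}\right\rfloor-\left\lfloor\frac np\right\rfloor$ belongs to $\{0,1\}$ and hence equals the indicator of $\left\lfloor\frac{n-1}{p-1}\right\rfloor>\left\lfloor\frac np\right\rfloor$; for $p>n$ this indicator is $0$. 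Writing $\psi(t)=t-\lfloor t\rfloor-\tfrac12$ for the sawtooth function and using $\lfloor x\rfloor=x-\psi(x)-\tfrac12$, Kellner's identity becomes the exact formula
$$\omega(\mathfrak{p}_n^+)=\sum_{\sqrt n<p\le n}\frac{n-p}{p(p-1)}\ +\ \sum_{\sqrt n<p\le n}\left(\psi\!\left(\frac np\right)-\psi\!\left(\frac{n-1}{p-1}\right)\right).$$

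The first sum is the main term. Writing $\frac{n-p}{p(p-1)}=\frac n{p^2}-\frac1p+O\!\big(\frac n{p^3}\big)$ and noting that $-\frac1p+O(\frac n{p^3})$ contributes $O(1)$ over $\sqrt n<p\le n$ while $n\sum_{p>n}p^{-2}=O(1/\log n)$, this sum equals $n\sum_{p>\sqrt n}p^{-2}+O(1)$. A partial summation based on the prime number theorem with its classical error term gives
$$n\sum_{p>\sqrt n}p^{-2}=n\int_{\sqrt n}^{\infty}\frac{\mathrm{d}t}{t^2\log t}+O\!\left(n^{1/2}\delta(\sqrt n)\right),$$
and the substitution $t=\mathrm{e}^{u}$ turns $\int_{\sqrt n}^{\infty}\frac{\mathrm{d}t}{t^2\log t}$ into $\int_{\log\sqrt n}^{\infty}\frac{\mathrm{e}^{-u}}{u}\,\mathrm{d}u=\mathrm{E}_1(\log\sqrt n)$. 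This produces the announced main term $n\,\mathrm{E}_1(\log\sqrt n)$.

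It remains to bound the second sum by $O(n^{1/2}\delta(\sqrt n))$. Fix the cutoff $P_0:=\sqrt n/\delta(\sqrt n)$. The contribution of the primes $p>P_0$ to $\omega(\mathfrak{p}_n^+)$ is $O(\sqrt n\,\delta(\sqrt n))$: such primes have $q=\lfloor n/p\rfloor<n/P_0=\sqrt n\,\delta(\sqrt n)$, and for each value of $q$ the primes $p$ with $\lfloor n/p\rfloor=q$ that contribute fill an interval of length $<1$, so at most one of them occurs. Hence, using the main-term estimate, we may replace the second sum by its restriction to $\sqrt n<p\le P_0$ at the cost of an admissible error. On that range we decompose $p$ into dyadic blocks $(P,2P]$, replace $\psi$ by a trigonometric polynomial of degree $H$ (Vaaler's or Vinogradov's), and are reduced to the exponential sums over primes
$$\sum_{P<p\le 2P}e\!\left(\frac{hn}{p}\right)\qquad\text{and}\qquad\sum_{P<p\le 2P}e\!\left(\frac{h(n-1)}{p-1}\right),\qquad e(t):=\mathrm{e}^{2\pi\mathrm{i}t},\ \ 1\le h\le H.$$
These are controlled by Granville and Ramar\'{e}'s estimate for exponential sums over primes; summing over $h\le H$ with weights $h^{-1}$ and over the $O(\log n)$ dyadic blocks, and choosing $H$ optimally, yields the required bound $O(n^{1/2}\delta(\sqrt n))$.

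The main obstacle is this last step: one must feed the dyadic pieces into the Granville--Ramar\'{e} bound with full uniformity in $h$ and in the scale $P$ (the second sum being over the shifted primes $p-1$, which the method accommodates with only cosmetic changes), and then verify that the full Fourier and dyadic summations still leave an error smaller than every fixed negative power of $\log n$ — this is precisely what fixes the form of the function $\delta$ in \eqref{e2}. The evaluation of the main term, by comparison, is routine.
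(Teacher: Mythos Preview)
Your outline is correct and follows essentially the same architecture as the paper: start from Kellner's identity, linearise to get a main term $\sum\frac{n-p}{p(p-1)}$ plus a $\psi$-difference, evaluate the main term by partial summation and the Prime Number Theorem, cut at $P_0=\sqrt n/\delta(\sqrt n)$, and bound the $\psi$-sum on $(\sqrt n,P_0]$ via Vaaler/Vinogradov plus Granville--Ramar\'e. Two points of difference are worth recording.

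First, your treatment of the large primes $p>P_0$ is actually simpler than the paper's. The paper bounds that tail by majorising $\frac{n-p}{p(p-1)}\le\alpha\sqrt n/p$ and then applying an interchange-of-summation lemma (its Lemma~\ref{le2} and Corollary~\ref{cor3}) to $\sum_p(\lfloor\frac{n+\alpha\sqrt n}{p}\rfloor-\lfloor\frac np\rfloor)$. Your argument---at most one contributing prime per value of $q=\lfloor n/p\rfloor$, since such a prime lies in the interval $(\frac n{q+1},\frac{n+q}{q+1}]$ of length $<1$---avoids that machinery entirely.

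Second, for the $\psi$-sum the paper does \emph{not} treat $e\!\big(\frac{h(n-1)}{p-1}\big)$ as a sum over shifted primes. Instead it writes $\frac{n-1}{p-1}=\frac np+g(p)$ with $g(p)=\frac{n-p}{p(p-1)}$ small, and strips off the factor $e(hg(p))$ by Abel summation at the cost of an extra factor $1+O(nh/M^2)$; one then needs only the single Granville--Ramar\'e bound for $\sum\Lambda(m)e(nh/m)$. This is cleaner than re-running the exponential-sum machinery for shifted primes, though your ``cosmetic changes'' remark is not wrong.

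Finally, one small correction: the exponential-sum step is not what fixes $\delta$. The paper obtains a genuine power saving there ($S_{12}\ll n^{49/100}(\log n)^{67/25}$); the function $\delta$ enters only through the Prime Number Theorem error in the main term and through the choice of cutoff $P_0$.
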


\section{Notation.} $n \in \Z_{\geqslant 1}$ is a large integer, say $n \geqslant 10^{20}$, and $p$ always denotes a prime number. For any $x \in \R$, $e(x) := e^{2 i \pi x}$ and $\psi(x) := x - \lfloor x \rfloor - \frac{1}{2}$. 
For $m \in \Z_{\geqslant 1}$ and real numbers $1 \leqslant a < b \leqslant m$
$$\omega(m;a,b) := \sum_{\substack{p \mid m \\ a < p \leqslant b}} 1.$$
Finally
\begin{equation}
   \delta(x) := e^{- \np{0.2098} (\log x)^{3/5} (\log \log x)^{-1/5}} \quad \left( x > e \right). \label{e2}
\end{equation}

\section{Tools}

\begin{lemma}
\label{le2}
For any $\varepsilon > 0$, $x \geqslant 2$ and $1 \leqslant x^{\varepsilon} \leqslant y \leqslant z < x$
$$\sum_{z < p \leqslant x} \left( \left \lfloor \frac{x+y}{p} \right \rfloor - \left \lfloor \frac{x}{p} \right \rfloor \right) < 2(y+1) \varepsilon^{-1}.$$
\end{lemma}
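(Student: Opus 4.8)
The plan is to read the sum combinatorially. Write $S$ for the sum in the statement. For a prime $p$, the integer $\lfloor(x+y)/p\rfloor-\lfloor x/p\rfloor$ is exactly the number of multiples of $p$ lying in $(x,x+y]$. Summing over the primes $p\in(z,x]$ and exchanging the two finite sums, I would rewrite
$$S=\sum_{z<p\leqslant x}\left(\left\lfloor\frac{x+y}{p}\right\rfloor-\left\lfloor\frac{x}{p}\right\rfloor\right)=\sum_{x<m\leqslant x+y}\omega(m;z,x),$$
so that it suffices to bound, for each integer $m$ slightly above $x$, the number $\omega(m;z,x)$ of prime divisors of $m$ lying in $(z,x]$, and then to count the integers $m$ in range.

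The key point is that every prime $p$ counted by $\omega(m;z,x)$ is large: $p>z\geqslant x^{\varepsilon}$. Hence if $m$ has $k\geqslant 1$ such prime divisors $p_1<\dots<p_k$, then $p_1\cdots p_k$ divides $m$ (the $p_i$ being distinct primes), and since each $p_i>x^{\varepsilon}$ we get
$$x^{k\varepsilon}<p_1\cdots p_k\leqslant m\leqslant x+y<2x\leqslant x^2,$$
where $x+y<2x$ because $y\leqslant z<x$, and $2x\leqslant x^2$ because $x\geqslant 2$. Since $x>1$ this forces $k\varepsilon<2$; together with the trivial case $k=0$ it yields the uniform bound $\omega(m;z,x)<2\varepsilon^{-1}$ for every integer $m$ with $x<m\leqslant x+y$.

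It then remains only to count: the number of integers $m$ with $x<m\leqslant x+y$ is $\lfloor x+y\rfloor-\lfloor x\rfloor\leqslant y+1$. Multiplying the uniform per-integer bound by this count gives $S<2(y+1)\varepsilon^{-1}$, as claimed. There is no real obstacle in this argument; the only things to watch are the strictness of the inequalities (each $\omega(m;z,x)$ is strictly below $2\varepsilon^{-1}$ and, as $y\geqslant 1$, the range of $m$ is nonempty, so the final inequality is indeed strict) and the fact that each hypothesis is used exactly where it should be — the bound $x^{\varepsilon}\leqslant z$ forces the relevant prime divisors of $m$ to be large, while $y<x$ together with $x\geqslant 2$ is precisely what traps $m$ below $x^{2}$ and hence $k$ below $2\varepsilon^{-1}$.
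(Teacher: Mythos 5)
Your proof is correct and follows essentially the same route as the paper: interchange the order of summation to get $\sum_{x<m\leqslant x+y}\omega(m;z,x)$, then bound each $\omega(m;z,x)$ by $2\varepsilon^{-1}$ using the fact that the relevant prime divisors all exceed $x^{\varepsilon}$ while their product divides $m<2x\leqslant x^{2}$. The paper phrases this last step by extracting the divisor $a$ of $m$ supported on primes in $(z,x]$ and taking logarithms, but that is the same argument as your comparison of exponents.
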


\begin{proof}
Interchanging the order of summation, we get
$$\sum_{z < p \leqslant x} \left( \left \lfloor \frac{x+y}{p} \right \rfloor - \left \lfloor \frac{x}{p} \right \rfloor \right) = \sum_{x < m \leqslant x+y} \omega(m;z,x).$$
Writing each $m \in \left( x,x+y \right]$ as $m=ab$ with $(a,b)=1$, $p \mid a \Longrightarrow p \in \left( z,x \right] $ and $p \mid b \Longrightarrow p \not \in \left( z,x \right]$, we infer
$$m \geqslant a > z^{\omega(m;z,x)} \Longleftrightarrow \omega(m;z,x) < \frac{\log m}{\log z} < \frac{\log 2x}{\log \left( x^{\varepsilon} \right) } \leqslant 2 \varepsilon^{-1}$$
and hence
$$\sum_{z < p \leqslant x} \left( \left \lfloor \frac{x+y}{p} \right \rfloor - \left \lfloor \frac{x}{p} \right \rfloor \right) < 2\varepsilon^{-1} \sum_{x < m \leqslant x+y} 1 \leqslant 2(y+1) \varepsilon^{-1}$$
as asserted.
\end{proof}

\begin{corollary}
\label{cor3}
$\alpha \in \left] n^{-7/16},1 \right[$. Then
$$\sum_{\alpha^{-1}\sqrt{n} < p \leqslant n} \left( \left \lfloor \frac{n}{p} + \frac{n-p}{p(p-1)} \right \rfloor - \left \lfloor \frac{n}{p} \right \rfloor \right) \ll \alpha \sqrt{n}.$$
\end{corollary}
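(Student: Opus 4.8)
To prove the corollary, the plan is to bound the sum by a single application of Lemma~\ref{le2}, after choosing one well-calibrated auxiliary length. Write $z := \alpha^{-1}\sqrt n$ for the left endpoint of the summation range and set $y := n/z = \alpha\sqrt n$; since $0<\alpha<1$ one has $y\leqslant z$, which is exactly the constraint that Lemma~\ref{le2} will impose.

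The first step is to dominate the summand by a quantity of the shape handled by Lemma~\ref{le2}. The key is the elementary inequality
$$\frac{n-p}{p(p-1)}\;\leqslant\;\frac{y}{p}\qquad(z<p\leqslant n),$$
which is equivalent to $\dfrac{n-p}{p-1}\leqslant\alpha\sqrt n$, i.e. to $p\geqslant\dfrac{n+\alpha\sqrt n}{1+\alpha\sqrt n}$. Since every prime in the range satisfies $p>z=\alpha^{-1}\sqrt n$, it suffices to note that $\alpha^{-1}\sqrt n\geqslant\dfrac{n+\alpha\sqrt n}{1+\alpha\sqrt n}$, an inequality which, after clearing denominators, reduces to the true statement $\alpha^2\leqslant1$. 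Because $t\mapsto\left\lfloor n/p+t\right\rfloor$ is nondecreasing, this yields
$$\left\lfloor\frac np+\frac{n-p}{p(p-1)}\right\rfloor-\left\lfloor\frac np\right\rfloor\;\leqslant\;\left\lfloor\frac{n+y}{p}\right\rfloor-\left\lfloor\frac np\right\rfloor$$
for every prime $p$ with $z<p\leqslant n$, so the sum in the statement is at most $\displaystyle\sum_{z<p\leqslant n}\left(\left\lfloor\tfrac{n+y}{p}\right\rfloor-\left\lfloor\tfrac np\right\rfloor\right)$.

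It then suffices to apply Lemma~\ref{le2} with $x=n$, with the above $y$ and $z$, and with $\varepsilon=\tfrac1{16}$. One checks the hypothesis $1\leqslant n^{1/16}\leqslant y\leqslant z<n$: indeed $y=\alpha\sqrt n>n^{-7/16}\cdot n^{1/2}=n^{1/16}\geqslant1$ because $\alpha>n^{-7/16}$; the inequality $y\leqslant z$ was observed above; and $z=\alpha^{-1}\sqrt n<n$ because $\alpha>n^{-7/16}>n^{-1/2}$. Lemma~\ref{le2} then bounds the sum by $2(y+1)\cdot16=32(\alpha\sqrt n+1)$, and since $\alpha\sqrt n>n^{1/16}>1$ this is $\ll\alpha\sqrt n$, which is the asserted estimate.

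I expect the only delicate point to be the calibration of $y$: it must be large enough for the floor inequality (essentially $y\geqslant n/z$) yet small enough both to satisfy $y\leqslant z$ in Lemma~\ref{le2} and to make its conclusion of order $\alpha\sqrt n$. The choice $y=n/z=\alpha\sqrt n$ threads all three requirements at once, and it does so precisely because the summation begins at $\alpha^{-1}\sqrt n$ with $\alpha<1$, so that $z^2=\alpha^{-2}n\geqslant n$. A marginally larger choice such as $y=n/(z-1)$ would already violate $y\leqslant z$ when $\alpha$ is extremely close to $1$, so one should keep $y=\alpha\sqrt n$ exactly.
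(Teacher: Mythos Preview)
Your proof is correct and follows essentially the same route as the paper: you establish $\dfrac{n-p}{p-1}\leqslant\alpha\sqrt n$ for $p>\alpha^{-1}\sqrt n$ (the paper does this by evaluating the decreasing function $p\mapsto\frac{n-p}{p-1}$ at the left endpoint, you by rearranging to a threshold on $p$ and reducing to $\alpha^2\leqslant1$), then bound the sum by $\sum_{z<p\leqslant n}\bigl(\lfloor(n+\alpha\sqrt n)/p\rfloor-\lfloor n/p\rfloor\bigr)$ and invoke Lemma~\ref{le2} with the identical parameters $x=n$, $y=\alpha\sqrt n$, $z=\alpha^{-1}\sqrt n$, $\varepsilon=\tfrac1{16}$.
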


\begin{proof}
Since the function $p \longmapsto \frac{n-p}{p-1}$ is non-increasing, if $p > \alpha^{-1} \sqrt{n}$, then
$$\alpha \sqrt{n} - \frac{n-p}{p-1} \geqslant \alpha \sqrt{n} - \frac{n-\alpha^{-1} \sqrt{n}}{\alpha^{-1} \sqrt{n}-1} = \frac{\sqrt{n}(1+\alpha)(1-\alpha)}{\sqrt{n}-\alpha} > 0.$$
Consequently, the sum of the left-hand side does not exceed
$$\leqslant \sum_{\alpha^{-1}\sqrt{n} < p \leqslant n} \left( \left \lfloor \frac{n + \alpha \sqrt{n}}{p} \right \rfloor - \left \lfloor \frac{n}{p} \right \rfloor \right)$$
and the proof is achieved with the use of Lemma~\ref{le2} with $x=n$, $y= \alpha \sqrt{n}$, $z = \alpha^{-1} \sqrt{n}$ and $\varepsilon = \frac{1}{16}$.
\end{proof}

\begin{lemma}
\label{le4}
Let $M \in \Z_{\geqslant 1}$ and $f : \left[ M,2M \right] \longrightarrow \R$ be any map. For any $H \in \Z_{\geqslant 1}$
$$\left | \sum_{ M < p \leqslant 2M} \psi(f(p)) \right | \ll \frac{M}{H} + \sum_{h \leqslant H} \frac{1}{h} \left | \sum_{ M < p \leqslant 2M} e(hf(p)) \right |.$$
\end{lemma}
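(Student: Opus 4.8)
The plan is to replace the sawtooth function $\psi$ by a trigonometric polynomial and thereby reduce everything to the exponential sums $\sum_{M<p\le 2M}e(hf(p))$. Since $f$ is allowed to be completely arbitrary, a naive truncation of the Fourier series $\psi(x)=-\sum_{h\ge 1}\frac{\sin 2\pi hx}{\pi h}$ is useless: its remainder is of size $\min\bigl(1,(H\|f(p)\|)^{-1}\bigr)$, and summing this over primes would already require the kind of equidistribution one is ultimately after. Instead I would invoke Vaaler's smoothed approximation (equivalently, the Beurling--Selberg majorant/minorant): for every $H\in\Z_{\ge 1}$ there is a trigonometric polynomial $\psi^{*}_{H}(x)=\sum_{1\le|h|\le H}c_h\,e(hx)$ with $|c_h|\le\frac{1}{2\pi|h|}$ such that
\[
\bigl|\psi(x)-\psi^{*}_{H}(x)\bigr|\ \le\ \frac{1}{2H+2}\,F_H(x),\qquad F_H(x):=\sum_{|h|\le H}\Bigl(1-\tfrac{|h|}{H+1}\Bigr)e(hx)\ \ge\ 0,
\]
where $F_H$ is the Fej\'er kernel. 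The three features that matter are that $\psi^{*}_H$ has vanishing constant term, that its coefficients decay like $1/|h|$, and that the approximation error is dominated by a \emph{non-negative} trigonometric polynomial of degree $\le H$ with total mass $\int_0^1 F_H=1$.

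Then I would estimate
\[
\Bigl|\sum_{M<p\le 2M}\psi(f(p))\Bigr|\ \le\ \Bigl|\sum_{M<p\le 2M}\psi^{*}_{H}(f(p))\Bigr|\ +\ \sum_{M<p\le 2M}\bigl|\psi(f(p))-\psi^{*}_{H}(f(p))\bigr|.
\]
For the first term, inserting the Fourier expansion of $\psi^{*}_H$ and using $|c_h|\le\frac{1}{2\pi|h|}$ together with the symmetry $h\leftrightarrow -h$ yields a bound $\ll\sum_{h\le H}\frac1h\bigl|\sum_{M<p\le 2M}e(hf(p))\bigr|$. For the second term, bounding it by $\frac{1}{2H+2}\sum_{M<p\le 2M}F_H(f(p))$ and expanding $F_H$, the frequency-zero contribution is $\frac{1}{2H+2}\bigl(\pi(2M)-\pi(M)\bigr)\ll M/H$, while the contribution of the frequencies $1\le|h|\le H$ is $\le\frac{1}{H+1}\sum_{h\le H}\bigl|\sum_{M<p\le 2M}e(hf(p))\bigr|\le\sum_{h\le H}\frac1h\bigl|\sum_{M<p\le 2M}e(hf(p))\bigr|$, since $1/(H+1)\le 1/h$ for $h\le H$. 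Adding these contributions gives exactly the asserted inequality.

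There is essentially no obstacle here beyond quoting the correct tool: the whole content is the existence of Vaaler's polynomial, which is a well-documented fact (see e.g. Graham--Kolesnik, or Montgomery's \emph{Ten Lectures}), and the rest is bookkeeping. The one point to watch is that the error term must be kept non-negative, so that summing it over primes produces only new copies of the same exponential sums (together with the harmless main term $M/H$) rather than an unmanageable quantity; this is precisely why one must use the smoothed approximant instead of plain Fourier truncation.
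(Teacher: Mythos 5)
Your proof is correct: the paper gives no argument for this lemma but simply cites Corollary~6.2 of Bordell\`{e}s' \emph{Arithmetic Tales}, and your Vaaler/Beurling--Selberg argument (non-negative Fej\'{e}r-kernel majorant for the error, coefficient bound $|c_h|\leqslant \frac{1}{2\pi |h|}$, zero-frequency term giving $M/H$) is precisely the standard proof of that corollary. So this is essentially the same approach, just written out in full.
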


\begin{proof}
See \cite[Corollary~6.2]{borl}.
\end{proof}

\begin{lemma}
\label{le5}
If $M \leqslant \frac{1}{5} \, x^{3/5}$, then, for any $M_1 \in \left( M,2M \right]$
$$\left | \sum_{M < m \leqslant M_1} \Lambda(m) e \left( \frac{x}{m} \right) \right | < 17 \left( x^2 M^{19} \right)^{1/24} (\log 16M)^{11/4}.$$
\end{lemma}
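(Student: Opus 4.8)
\emph{Proof plan.} The plan is to attack the sum with the standard apparatus for exponential sums over primes: remove the von Mangoldt weight by Vaughan's identity, and then bound the resulting linear (``Type~I'') and bilinear (``Type~II'') sums by van der Corput's method. The arithmetic input is an explicit estimate for the reciprocal exponential sum $\sum_{N<n\le 2N}e(C/n)$: the second-derivative test gives $\ll C^{1/2}N^{-1/2}+C^{-1/2}N^{3/2}$, and one more van der Corput step --- equivalently the exponent pair $\left(\tfrac16,\tfrac23\right)$ --- gives $\ll C^{1/6}N^{1/3}+C^{-1/6}N^{7/6}$ when $C$ is large enough. The hypothesis $M\le\frac15x^{3/5}$ is precisely what allows the lower-order tails (the $C^{-1/2}N^{3/2}$-type pieces) to be absorbed into the main term. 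Since the bound is uniform in $M_1$ one may write $\sum_{M<m\le M_1}=\sum_{m\le M_1}-\sum_{m\le M}$; prime powers $p^k$ with $k\ge2$ contribute $\ll M^{1/2}\log M$ and are negligible; and if $M$ is below $x^{1/5}$, say, the trivial bound $\ll M\log M$ already beats the target, so we may assume $M\ge x^{1/5}$.

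Apply Vaughan's identity with parameters $U=V$ to be fixed later. Up to $O(\log M)$ dyadic ranges, the sum is reduced to \textbf{Type~I} sums $\sum_{t\le U^{2}}a_t\sum_{m}e\!\left(\tfrac{x}{tm}\right)$ with $|a_t|\ll\tau(t)\log16M$ and the inner range dyadic in $m\asymp M/t$, and to \textbf{Type~II} sums $\sum_{d\sim D}\sum_{m\sim L}a_db_m\,e\!\left(\tfrac{x}{dm}\right)$ with $DL\asymp M$, $U\ll D,L$, $|a_d|\ll\tau(d)$ and $|b_m|\ll\Lambda(m)$. In a Type~I sum, bound $\sum_{m\asymp M/t}e\!\left(\tfrac{x/t}{m}\right)$ by the second-derivative test for small $t$ and by the $\left(\tfrac16,\tfrac23\right)$-bound for larger $t$; summing over $t$ and using $\sum_{t\le T}\tau(t)\ll T\log T$ and $\sum_{t\le T}\tau(t)t^{-1/2}\ll T^{1/2}\log T$ yields
$$\sum_{\text{Type I}}\ll\bigl(U^{2}x^{1/2}M^{-1/2}+Ux^{1/6}M^{1/3}+x^{-1/2}M^{3/2}+x^{-1/6}M^{7/6}\bigr)(\log16M)^{O(1)},$$
and $M\le\frac15x^{3/5}$ makes the last two tails $\ll x^{1/12}M^{19/24}$.

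For a Type~II sum, assume by symmetry that $L\le D$, so $L\ll M^{1/2}$, and apply Cauchy--Schwarz in $d$, then expand the square. The diagonal $m_1=m_2$ contributes $\ll D^{2}\sum_{m\sim L}|b_m|^{2}\ll DM(\log16M)^{O(1)}$. The off-diagonal is governed by the reciprocal sums $\sum_{d\sim D}e(C/d)$ with $C=x(m_2-m_1)/(m_1m_2)\asymp xr/L^{2}$, $1\le r=|m_1-m_2|\le L$; inserting $\ll C^{1/6}D^{1/3}$ (and the second-derivative test on the range of small $r$), and using $\sum_{r\le L}r^{1/6}\ll L^{7/6}$ together with the $\ll L$ pairs at each given $r$, gives an off-diagonal contribution (after the outer factor $D$ from Cauchy--Schwarz)
$$\ll D^{4/3}x^{1/6}L^{11/6}(\log16M)^{O(1)}=x^{1/6}M^{4/3}L^{1/2}(\log16M)^{O(1)}\ll x^{1/6}M^{19/12}(\log16M)^{O(1)},$$
the last step by $L\ll M^{1/2}$. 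Since also $D\ll M/U$, this yields $\bigl|\sum_{\text{Type II}}\bigr|\ll\bigl(MU^{-1/2}+x^{1/12}M^{19/24}\bigr)(\log16M)^{O(1)}$, and $M\le\frac15x^{3/5}$ again absorbs the lower-order tails produced by the second-derivative test.

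It remains to optimise $U$. Choosing $U$ with $x^{-1/6}M^{5/12}\le U\le x^{-5/24}M^{31/48}$ --- a non-empty range precisely because $M\le\frac15x^{3/5}$ and $M\ge x^{1/5}$ --- makes $U^{2}x^{1/2}M^{-1/2}$, $Ux^{1/6}M^{1/3}$ and $MU^{-1/2}$ all $\ll x^{1/12}M^{19/24}$, so the entire sum is $\ll x^{1/12}M^{19/24}(\log16M)^{O(1)}=(x^{2}M^{19})^{1/24}(\log16M)^{O(1)}$. Tracking the logarithmic losses carefully --- the $O(\log M)$ Vaughan ranges, the $\log$ in the Type~I coefficients, and the weights in the Type~II Cauchy--Schwarz, where $\sum_{m\sim L}\Lambda(m)^{2}\ll L(\log16M)^{2}$ and $\sum_{d\sim D}\tau(d)^{2}\ll D(\log16M)^{3}$ --- fixes the exponent at $11/4$, and a routine numerical estimate of the implied constants (using that $x$ is large) yields the stated $17$ and the $\log(16M)$ normalisation. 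The main obstacle is the Type~II estimate: a plain Cauchy--Schwarz followed by the second-derivative test falls short of $x^{1/12}M^{19/24}$ by a power of $M$, so one genuinely needs both the extra van der Corput step and the saving from averaging over $r=m_1-m_2$; everything else is routine, the only real work being the explicit bookkeeping of the constants.
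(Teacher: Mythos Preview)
The paper does not prove this lemma at all: its entire proof is the one-line citation ``See \cite{gra}, Theorem~9 with $k=2$.'' So there is nothing in the paper to compare your argument against except the Granville--Ramar\'e paper itself, and your plan is precisely the skeleton of their argument: Vaughan's identity, Type~I sums handled by van der Corput's second-derivative test and one $A$-process step (the exponent pair $(1/6,2/3)$), Type~II sums by Cauchy--Schwarz followed by the same $(1/6,2/3)$ bound on the off-diagonal reciprocal sums, and a balancing choice of the Vaughan parameter.

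Your arithmetic in the main terms checks out: the Type~II off-diagonal really does produce exactly $x^{1/12}M^{19/24}$ after Cauchy--Schwarz, $L\le M^{1/2}$, and $\sum_{r\le L}r^{1/6}\ll L^{7/6}$, and the balancing range for $U$ is correctly identified. Two small points to tighten. First, the threshold below which the trivial bound $M\log M$ already beats $(x^2M^{19})^{1/24}$ is $M\le x^{2/5}$, not $x^{1/5}$; with your stated threshold $M\ge x^{1/5}$ the upper end $x^{-5/24}M^{31/48}$ of your $U$-window can be below~$1$, so Vaughan is not actually available there. Taking the cutoff at $x^{2/5}$ fixes this (at $M=x^{2/5}$ the lower end $x^{-1/6}M^{5/12}$ equals~$1$). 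Second, the exponent-pair bounds carry additive error terms (of Kusmin--Landau type) which you have suppressed; you should check that under $x^{2/5}\le M\le \tfrac15 x^{3/5}$ these are dominated by $x^{1/12}M^{19/24}$ as well --- they are, but it is not automatic. The final claim that careful bookkeeping yields the explicit $17$ and the exponent $11/4$ is of course where all the actual work in \cite{gra} lies; your sketch is a correct outline, not a replacement for that computation.
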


\begin{proof}
See \cite[Theorem~9]{gra} with $k=2$.
\end{proof}

\begin{lemma}
\label{le6}
For any real number $t > 1$
$$\sum_{p > t} \frac{1}{p(p-1)} = \mathrm{E}_1(\log t) + O \left( t^{-1} \delta(t) \right).$$
\end{lemma}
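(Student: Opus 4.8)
The plan is to reduce the statement to the prime number theorem by partial summation. First I would record that the change of variable $v = e^{u}$ in \eqref{e1} gives
$$\mathrm{E}_1(\log t) = \int_t^\infty \frac{\mathrm{d}v}{v^2 \log v},$$
so that it suffices to compare $\sum_{p > t} \frac{1}{p(p-1)}$ with this integral. Writing the sum as a Riemann--Stieltjes integral against the prime-counting function $\pi$, namely
$$\sum_{p > t} \frac{1}{p(p-1)} = \int_{t}^{\infty} \frac{\mathrm{d}\pi(v)}{v(v-1)}$$
(the integral converging since $\pi(v) \ll v (\log v)^{-1}$ while $\frac{1}{v(v-1)} \ll v^{-2}$), I would then insert the Korobov--Vinogradov form of the prime number theorem, $\pi(v) = \operatorname{li}(v) + O(v\,\delta(v))$ --- this is where the numerical constant in \eqref{e2} originates --- and split the integral into a main part coming from $\operatorname{li}$ and a remainder governed by $R(v) := \pi(v) - \operatorname{li}(v) \ll v\,\delta(v)$.

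For the main part, using $\operatorname{li}'(v) = (\log v)^{-1}$ and the elementary identity $\frac{1}{v(v-1)} = \frac{1}{v^2} + \frac{1}{v^2(v-1)}$, one gets
$$\int_{t}^{\infty} \frac{\mathrm{d}\operatorname{li}(v)}{v(v-1)} = \int_{t}^{\infty} \frac{\mathrm{d}v}{v^2 \log v} + \int_{t}^{\infty} \frac{\mathrm{d}v}{v^2(v-1)\log v} = \mathrm{E}_1(\log t) + O\!\left(t^{-2}\right).$$
For the remainder, I would integrate by parts:
$$\int_{t}^{\infty} \frac{\mathrm{d}R(v)}{v(v-1)} = -\,\frac{R(t)}{t(t-1)} \;-\; \int_{t}^{\infty} R(v)\,\frac{\mathrm{d}}{\mathrm{d}v}\!\left(\frac{1}{v(v-1)}\right)\mathrm{d}v ,$$
the contribution at infinity vanishing because $R(v)/(v(v-1)) \ll \delta(v)/v \to 0$. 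The boundary term at $t$ is $O(\delta(t)/t)$, and since $\frac{\mathrm{d}}{\mathrm{d}v}\big(v^{-1}(v-1)^{-1}\big) \ll v^{-3}$ the last integral is $\ll \int_t^\infty \delta(v)\,v^{-2}\,\mathrm{d}v \leqslant \delta(t)\int_t^\infty v^{-2}\,\mathrm{d}v = \delta(t)/t$, where I use that $\delta$ is (eventually) decreasing, so $\delta(v) \leqslant \delta(t)$ for $v \geqslant t$. Collecting everything and noting $t^{-2} \ll t^{-1}\delta(t)$ yields the claimed estimate.

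The argument is plain partial summation; I expect no genuine obstacle. The only points demanding a little attention are checking that $\delta$ is monotone on the relevant range (which holds since $(\log v)^{3/5}(\log\log v)^{-1/5}$ is increasing for large $v$), and invoking a version of the prime number theorem with error term matching the constant in \eqref{e2}. If one prefers to avoid $\operatorname{li}$, the same computation runs with $\theta(v) = v + O(v\,\delta(v))$ in place of $\pi(v)$ --- writing $\frac{1}{p(p-1)} = \frac{\log p}{p(p-1)} \cdot \frac{1}{\log p}$ and integrating against $\mathrm{d}\theta(v)$ --- at the cost of carrying one extra factor $(\log v)^{-1}$, which in fact produces the slightly stronger error term $O\big(\delta(t)/(t\log t)\big)$.
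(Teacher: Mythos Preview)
Your proof is correct and takes essentially the same route as the paper's: partial summation against $\pi(v) = \li(v) + O(v\,\delta(v))$, with the main term reducing to $\int_t^\infty v^{-2}(\log v)^{-1}\,\mathrm{d}v = \mathrm{E}_1(\log t)$. The only cosmetic difference is that the paper splits $\tfrac{1}{p(p-1)} = \tfrac{1}{p^2} + \tfrac{1}{p^2(p-1)}$ at the level of the sum and applies partial summation to $\sum_{p>t} p^{-2}$, whereas you carry the weight $\tfrac{1}{v(v-1)}$ through the Stieltjes integral and split afterwards.
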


\begin{proof}
By partial summation and the Prime Number Theorem, for instance in the form given in \cite{for}
\begin{eqnarray*}
   \sum_{p > t} \frac{1}{p(p-1)} &=& \sum_{p > t} \frac{1}{p^2} + \sum_{p > t} \frac{1}{p^2(p-1)} \\
   &=& - \frac{\pi(t)}{t^2} + 2 \int_t^\infty \frac{\pi(u)}{u^3} \, \textrm{d}u + O \left( \frac{1}{t^2 \log t} \right) \\
   &=& - \frac{1}{t^2} \left( \li(t) + O \left( t \delta(t) \right) \right) + 2 \int_t^\infty \frac{1}{u^3} \left( \li(u) + O \left( u \delta(u) \right) \right) \, \textrm{d}u + O \left( \frac{1}{t^2 \log t} \right) \\
   &=& - \frac{\li(t)}{t^2} + 2 \int_t^\infty \frac{\li(u)}{u^3} \, \textrm{d}u + O \left( t^{-1} \delta(t) \right) \\
\end{eqnarray*}
and integrating by parts
$$- \frac{\li(t)}{t^2} + 2 \int_t^\infty \frac{\li(u)}{u^3} \, \textrm{d}u = \int_t^\infty \frac{\textrm{d}u}{u^2 \log u} = \int_{\log t}^\infty \frac{e^{-v}}{v} \, \textrm{d}v = \mathrm{E}_1(\log t)$$
as asserted.
\end{proof}

\section{Proof of Theorem~\ref{t1}}

\subsection{First step}

Notice that, if $\sqrt{n} < p \leqslant n$, then
$$0 \leqslant \frac{n-p}{p-1} < \sqrt{n}$$
and hence
$$\frac{n-1}{p-1} - \frac{n}{p} = \frac{1}{p} \frac{n-p}{p-1} < \frac{\sqrt{n}}{p} < 1$$
so that we get from \eqref{e1}
$$\omega \left( \mathfrak{p}_n^+ \right) = \sum_{\sqrt{n} < p \leqslant n} \left( \left \lfloor \frac{n-1}{p-1} \right \rfloor - \left \lfloor \frac{n}{p} \right \rfloor \right) = \sum_{\sqrt{n} < p \leqslant n} \left( \left \lfloor \frac{n}{p} + \frac{n-p}{p(p-1)} \right \rfloor - \left \lfloor \frac{n}{p} \right \rfloor \right).$$
Split the sum into two subsums as
$$\omega \left( \mathfrak{p}_n^+ \right) = \left( \sum_{\sqrt{n} < p \leqslant \sqrt{n} \, \left( \delta \left( \sqrt{n} \right) \right)^{-1}} + \sum_{\sqrt{n} \, \left( \delta \left( \sqrt{n} \right) \right)^{-1} < p \leqslant n} \right) \left( \left \lfloor \frac{n}{p} + \frac{n-p}{p(p-1)} \right \rfloor - \left \lfloor \frac{n}{p} \right \rfloor \right) = S_1 + S_2$$

\subsection{The sum $S_2$}

We use Corollary~\ref{cor3} with $\alpha = \delta \left( \sqrt{n} \right)$ giving immediately

\begin{equation}
   S_2 \ll \sqrt{n} \, \delta \left( \sqrt{n} \right). \label{e3}
\end{equation}

\subsection{The sum $S_1$}

First write
$$S_1 = \sum_{\sqrt{n} < p \leqslant \sqrt{n} \, \left( \delta \left( \sqrt{n} \right) \right)^{-1}} \frac{n-p}{p(p-1)} - \sum_{\sqrt{n} < p \leqslant \sqrt{n} \, \left( \delta \left( \sqrt{n} \right) \right)^{-1}} \left( \psi \left( \frac{n}{p} + \frac{1}{p} \frac{n-p}{p-1} \right) - \psi \left( \frac{n}{p} \right) \right) :=S_{11} - S_{12}$$
say.

\subsubsection{The main term}

From Lemma~\ref{le6}
\begin{eqnarray*}
   \sum_{\sqrt{n} < p \leqslant \sqrt{n} \left( \delta (\sqrt{n}) \right)^{-1}} \frac{1}{p(p-1)} &=& \mathrm{E}_1 \left( \log \sqrt{n} \right) - \mathrm{E}_1 \left( \log \left( \sqrt{n} \left( \delta (\sqrt{n}) \right)^{-1}\right)  \right) + O \left( n^{-1/2} \delta \left( \sqrt{n} \right) \right)
\end{eqnarray*}
and the inequalities
$$\frac{e^{-x}}{x+1} < \mathrm{E}_1 (x) < \frac{e^{-x}}{x} \quad \left( x > 0 \right)$$
imply that
$$\mathrm{E}_1 \left( \log \left( \sqrt{n} \left( \delta (\sqrt{n}) \right)^{-1}\right)  \right) \asymp n^{-1/2} \frac{\delta \left( \sqrt{n} \right)}{\log \left(n \left( \delta (\sqrt{n}) \right)^{-2} \right)} \ll n^{-1/2} \delta \left( \sqrt{n} \right)$$
and hence
$$\sum_{\sqrt{n} < p \leqslant \sqrt{n} \left( \delta (\sqrt{n}) \right)^{-1}} \frac{1}{p(p-1)} = \mathrm{E}_1 \left( \log \sqrt{n} \right)  + O \left( n^{-1/2} \delta \left( \sqrt{n} \right) \right).$$
Therefore
\begin{eqnarray}
   S_{11} &=& n \sum_{\sqrt{n} < p \leqslant \sqrt{n} \left( \delta (\sqrt{n}) \right)^{-1}} \frac{1}{p(p-1)} - \sum_{\sqrt{n} < p \leqslant \sqrt{n} \left( \delta (\sqrt{n}) \right)^{-1}} \frac{1}{p-1} \notag \\
   &=& n \sum_{\sqrt{n} < p \leqslant \sqrt{n} \left( \delta (\sqrt{n}) \right)^{-1}} \frac{1}{p(p-1)} + O \left( \log \log n \right) \notag \\
   &=& n \mathrm{E}_1 \left( \log \sqrt{n} \right)  + O \left( n^{1/2} \delta \left( \sqrt{n} \right) \right). \label{e4}
\end{eqnarray}

\subsubsection{The error term}

It remains to prove that, for $n$ sufficiently large
\begin{equation}
   \left | S_{12} \right | \ll n^{1/2} \delta \left( \sqrt{n} \right). \label{e5}
\end{equation}
This estimate follows from the next result.

\begin{lemma}
\label{le7}
For any integer $n \geqslant 3$ sufficiently large
$$\left | S_{12} \right | \ll n^{49/100} (\log n)^{67/25}.$$
\end{lemma}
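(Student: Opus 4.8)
The plan is to treat $S_{12}$ as an exponential sum over primes and estimate it via Lemma~\ref{le5}, after reducing the sawtooth function to exponentials through Lemma~\ref{le4}. First I record the identity $\frac{n}{p}+\frac{1}{p}\cdot\frac{n-p}{p-1}=\frac{n-1}{p-1}$, so that
$$S_{12}=\sum_{\sqrt n<p\leqslant\sqrt n\,(\delta(\sqrt n))^{-1}}\left(\psi\!\left(\frac{n-1}{p-1}\right)-\psi\!\left(\frac{n}{p}\right)\right),$$
and it suffices to bound $\sum_p\psi(n/p)$ and $\sum_p\psi\!\left(\tfrac{n-1}{p-1}\right)$ separately over this range. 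Since $\sqrt n\,(\delta(\sqrt n))^{-1}=n^{1/2}\exp\!\bigl(O((\log n)^{3/5})\bigr)$, the interval of summation is the union of $O((\log n)^{3/5})$ dyadic blocks $(M,2M]$ with $\sqrt n\leqslant M\leqslant\sqrt n\,(\delta(\sqrt n))^{-1}$, the extremal block possibly truncated (harmless, as Lemma~\ref{le5} already permits $M_1\in(M,2M]$).

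On one block, Lemma~\ref{le4} with $f(p)=n/p$, resp.\ $f(p)=(n-1)/(p-1)$, gives for every $H\in\Z_{\geqslant1}$
$$\left|\sum_{M<p\leqslant2M}\psi(f(p))\right|\ll\frac{M}{H}+\sum_{h\leqslant H}\frac1h\left|\sum_{M<p\leqslant2M}e(hf(p))\right|.$$
The prime sum is converted to a von Mangoldt sum in the standard way, $\sum_{M<p\leqslant2M}e(hf(p))=\sum_{M<m\leqslant2M}\frac{\Lambda(m)}{\log m}\,e(hf(m))+O(\sqrt M)$ (only $O(\sqrt M)$ prime powers $p^k$, $k\geqslant2$, lie in $(M,2M]$), and Abel summation extracts the factor $1/\log m\asymp1/\log n$. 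For $f(m)=n/m$ the partial sums $\sum_{M<m\leqslant M_1}\Lambda(m)e(hn/m)$ are exactly the objects of Lemma~\ref{le5} with $x=hn$, whose hypothesis $M\leqslant\frac15(hn)^{3/5}$ holds because $M\ll n^{1/2+o(1)}$ while $(hn)^{3/5}\geqslant n^{3/5}$. For $f(m)=(n-1)/(m-1)$ one is led, after the substitution $m\mapsto m+1$, to an exponential sum of $\Lambda(m+1)\,e\!\left(\tfrac{h(n-1)}{m}\right)$, i.e.\ over shifted primes; here the proof of Lemma~\ref{le5} has to be rerun, detecting the primality of $m+1$ by Vaughan's identity and bounding the resulting Type~I sums by exponent pairs (the phases $\tfrac{h(n-1)}{d\ell-1}$ have the same derivative sizes as $x/m$) and the Type~II sums by Cauchy--Schwarz; the harmless shift of the denominator costs nothing, so the bound obtained has the same shape as in Lemma~\ref{le5}. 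This shifted version of Granville-Ramar\'{e}'s estimate is the technical heart of the argument.

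In both cases one reaches, for $h\leqslant H$, a bound $\sum_{M<p\leqslant2M}e(hf(p))\ll(h^2n^2M^{19})^{1/24}(\log n)^{7/4}+\sqrt M$, whence, using $\sum_{h\leqslant H}h^{1/12-1}\ll H^{1/12}$,
$$\left|\sum_{M<p\leqslant2M}\psi(f(p))\right|\ll\frac{M}{H}+(n^2M^{19})^{1/24}(\log n)^{7/4}H^{1/12}+\sqrt M\log n.$$
Choosing $H\asymp M^{5/26}n^{-1/13}(\log n)^{-21/13}$, which exceeds $1$ for $n$ large since $M\geqslant\sqrt n$, balances the first two terms and leaves a per-block bound $\ll M^{21/26}n^{1/13}(\log n)^{21/13}+\sqrt M\log n$. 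Inserting $M\leqslant\sqrt n\,(\delta(\sqrt n))^{-1}$ and summing over the $O((\log n)^{3/5})$ blocks gives
$$|S_{12}|\ll n^{25/52}\,(\delta(\sqrt n))^{-21/26}(\log n)^{O(1)}+n^{1/4}(\log n)^{O(1)}.$$
Finally $(\delta(\sqrt n))^{-21/26}=\exp\!\bigl(O((\log n)^{3/5})\bigr)$ and $\frac{25}{52}<\frac{49}{100}$ with $\frac{49}{100}-\frac{25}{52}=\frac{3}{325}$; this slack in the exponent of $n$ absorbs the subpolynomial factor $(\delta(\sqrt n))^{-21/26}$ together with the powers of $\log n$, and keeping track of the latter yields the stated $|S_{12}|\ll n^{49/100}(\log n)^{67/25}$. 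Since $\delta(\sqrt n)^{-1}$ is subpolynomial, this is moreover $\ll n^{1/2}\delta(\sqrt n)$, which is \eqref{e5}.

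The step I expect to be the real obstacle is precisely the exponential sum over primes with the shifted denominator $p-1$: Lemma~\ref{le5} cannot be quoted verbatim, and one must re-establish the Granville-Ramar\'{e} bound for the phase $\tfrac{h(n-1)}{m-1}$, running the Vaughan decomposition and the attendant van der Corput and Cauchy--Schwarz estimates with $m$ replaced by $m-1$ and checking that nothing deteriorates. The remainder --- the dyadic dissection, the optimisation in $H$, and verifying that the accumulated factors $(\delta(\sqrt n))^{-O(1)}(\log n)^{O(1)}$ stay below $n^{49/100}(\log n)^{67/25}$ --- is routine given the exponent slack $\frac{3}{325}$.
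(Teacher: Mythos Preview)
Your approach diverges from the paper's at exactly the point you flag as the obstacle, and the paper's device for avoiding that obstacle is the idea you are missing. Instead of treating the phase $(n-1)/(p-1)$ head-on and needing a shifted-primes analogue of Lemma~\ref{le5}, the paper keeps the phase in the form $\frac{n}{p}+g(p)$ with $g(p)=\frac{n-p}{p(p-1)}$, applies Lemma~\ref{le4}, and then removes the factor $e(hg(p))$ by a second Abel summation: since $g$ is monotone on $(M,2M]$ with $|hg(p)|\leqslant 2nh/M^{2}$,
\[
\left|\sum_{M<p\leqslant 2M}e\!\left(\frac{nh}{p}\right)e(hg(p))\right|\ll\left(1+\frac{nh}{M^{2}}\right)\max_{M<M_{1}\leqslant 2M}\left|\sum_{M<p\leqslant M_{1}}e\!\left(\frac{nh}{p}\right)\right|,
\]
and Lemma~\ref{le5} now applies verbatim with $x=nh$. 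The extra factor $1+nh/M^{2}$ (at most $1+h$, since $M>\sqrt n$) alters the optimisation in $H$ --- the paper balances with $H\asymp (M^{53}n^{-26})^{1/50}$ and lands exactly on $n^{49/100}(\log n)^{67/25}$ rather than your $n^{25/52+o(1)}$ --- but the point is that no new exponential-sum estimate is required.

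Your route would go through if the shifted-primes version of Lemma~\ref{le5} were established, and your sketch (Vaughan for $\Lambda(m)$ with phase $x/(m-1)$, Type~I by exponent pairs, Type~II by Cauchy--Schwarz) is plausible. But the Type~II step is not merely ``checking that nothing deteriorates'': after Cauchy--Schwarz one meets phases $\tfrac{x}{d\ell_{1}-1}-\tfrac{x}{d\ell_{2}-1}$ in place of the clean $\tfrac{x(\ell_{2}-\ell_{1})}{d\ell_{1}\ell_{2}}$, and the algebraic simplification that drives the Granville--Ramar\'e bound has to be redone. As written this is a genuine gap; the paper's Abel-summation trick makes it unnecessary.
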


\begin{proof}
Split the interval $\left( \sqrt{n} , \sqrt{n} \, \left( \delta (\sqrt{n}) \right)^{-1} \right]$ into $O (\log n)$ dyadic subintervals of the shape $\left( M,2M\right]$ so that
$$\left | \sum_{\sqrt{n} < p \leqslant \sqrt{n} \, \left( \delta (\sqrt{n}) \right)^{-1}} \psi \left( \frac{n}{p} + g(p) \right) \right | \ll \underset{\sqrt{n} < M \leqslant \sqrt{n} \, \left( \delta (\sqrt{n}) \right)^{-1}}{\max} \left | \sum_{M < p \leqslant 2M} \psi \left( \frac{n}{p} + g(p) \right) \right | \log n$$
where either $g(p) = 0$ or $g(p) := \frac{n-p}{p(p-1)}$. From Lemma~\ref{le4}
$$\left | \sum_{M < p \leqslant 2M} \psi \left( \frac{n}{p} + g(p) \right) \right | \ll \frac{M}{H} + \sum_{h \leqslant H} \frac{1}{h} \left | \sum_{ M < p \leqslant 2M} e \left( \frac{nh}{p} \right) e(hg(p)) \right |$$
and, for any $p \in \left( M,2M \right]$, we have
$$\left | hg(p) \right | \leqslant \frac{nh}{p(p-1)} \leqslant \frac{2nh}{M^2}$$
so that, by Abel summation
$$\left | \sum_{ M < p \leqslant 2M} e \left( \frac{nh}{p} \right) e(hg(p)) \right | \ll \left( 1 + \frac{nh}{M^2} \right) \max_{M < M_1 \leqslant 2M} \left | \sum_{ M < p \leqslant M_1} e \left( \frac{nh}{p} \right) \right |.$$
Now by Abel summation and Lemma~\ref{le5}
\begin{eqnarray*}
   \left | \sum_{M < p \leqslant M_1} e \left( \frac{nh}{p} \right) \right | & \leqslant & \frac{2}{\log M} \underset{M < M_2 \leqslant M_1}{\max} \left | \sum_{M \leqslant p \leqslant M_2} \left( \log p \right) e \left( \frac{nh}{p} \right) \right | \\
   & \ll & \frac{1}{\log M} \left( \underset{M < M_2 \leqslant M_1}{\max} \left | \sum_{M \leqslant m \leqslant M_2} \Lambda(m) e \left( \frac{nh}{m} \right) \right | + M_1^{1/2} \right) \\
   & \ll & \left( h^2n^2 M^{19} \right)^{1/24} (\log M)^{7/4} + \frac{M_1^{1/2}}{\log M}.
\end{eqnarray*}
Consequently
\begin{eqnarray*}
   \left | \sum_{M < p \leqslant 2M} \psi \left( \frac{n}{p} + g(p) \right) \right | & \ll & \frac{M}{H} + \sum_{h \leqslant H} \frac{1}{h}\left( 1 + \frac{nh}{M^2} \right) \left( \left( h^2n^2 M^{19} \right)^{1/24} (\log M)^{7/4} + \frac{M^{1/2}}{\log M} \right) \\
   & \ll & \frac{M}{H} + \left\lbrace \left( (Hn)^{26} M^{-29} \right)^{1/24} + \left( n^2H^2M^{19} \right)^{1/24} \right\rbrace (\log M)^{7/4} \\
   & & {} + \frac{nH}{M^{3/2} \log M} + \frac{M^{1/2} \log H}{\log M}.
\end{eqnarray*}
Choose
$$H = \left \lfloor \left( M^{53} n^{-26} \right)^{1/50} \left( \tfrac{e}{42} \log M \right)^{-21/25} \right \rfloor$$
so that
\begin{eqnarray*}
   \left | \sum_{M < p \leqslant 2M} \psi \left( \frac{n}{p} + g(p) \right) \right | & \ll & \left( n^{25} M^{-3} \right)^{1/50} (\log M)^{21/25} + \left( n M^{22} \right)^{1/25} (\log M)^{42/25} \\
   & & {} + \left( n^{12} M^{-11} \right)^{1/25}(\log M)^{-46/25} + M^{1/2}
\end{eqnarray*}
and hence
\begin{eqnarray*}
   \left | S_{12} \right | & \ll & \left( n^{49/100}  + n^{48/100} \left( \delta \left( \sqrt{n} \right) \right)^{-22/25} \right) (\log n)^{67/25} + n^{13/50} (\log n)^{-21/25} + n^{1/4} \left( \delta \left( \sqrt{n} \right) \right)^{-1/2} \log n\\
   & \ll & n^{49/100} (\log n)^{67/25}
\end{eqnarray*}
whenever $n \geqslant 3$ is sufficiently large, concluding the proof.
\end{proof}

\subsection{Completion of the proof of Theorem~\ref{t1}}

Follows at once from \eqref{e3}, \eqref{e4} and \eqref{e5}.
\qed

\end{document}